\newtheorem{thm}{Theorem}[section]
\newtheorem{theorem}{Theorem}[section]
\newtheorem{cor}[thm]{Corollary}
\newtheorem{lem}[thm]{Lemma}
\newtheorem{prop}[thm]{Proposition}
\theoremstyle{definition}
\numberwithin{equation}{section}
\begin{document}

\title{Non-vanishing Fourier coefficients of modular forms}

\author{Peng Tian\\
Department of Mathematics\\
East China University of Science and Technology\\
200237, Shanghai, P. R. China\\
and\\
Hourong Qin\\
Department of Mathematics\\
Nanjing University\\
210093, Nanjing, P. R. China}


\maketitle





\renewcommand{\thefootnote}{\arabic{footnote}}
\setcounter{footnote}{0}

\begin{abstract}
In this paper, we generalize D. H. Lehmer's result to give a sufficient condition for level one cusp forms $f$ with integral Fourier coefficients such that the smallest $n$ for which the  coefficients $a_n(f)=0$ must be a prime. Then we describe a method to compute a bound $B$ of $n$ such that $a_n(f)\ne0$ for all $n<B$. As examples, we achieve the explicit bounds $B_k$ for the unique cusp form $\Delta_{k}$ of level one and weight k with $k=16, 18, 20, 22, 26$ such that $a_n(\Delta_k)\ne0$ for all $n<B_k$.

\end{abstract}

\section{Introduction}

In 1947, D. H. Lehmer conjectured that Ramanujan's tau function $\tau(n)$ is non-vanishing for all $n$. In \cite[Theorem 2]{lehmer} he proved that the smallest $n$ for which $\tau(n)=0$ must be a prime and showed that $\tau(n) \neq 0$ for all $n<3316799$ by observing the congruences of $\tau(p)$ for certain primes $p$. 

J-P. Serre \cite{serrelehmer} summed up the congruences of $\tau(p)$ for exceptional primes of $\tau(p)$(for the definition see Section \ref{sec:representations}) and showed that if $\tau(p)=0$ for a prime $p$, then 

\begin{center}
	$\begin{array}{lll}
	p\equiv -1 \mod 2^{11}3^75^3691, \\
	p\equiv  -1,19,31 \mod 7^2 \ \ \ \ \ \ \ \mathrm{and} \\
	p\equiv \ a \ non$-$square \mod 23. \\
	\end{array}$
\end{center}
He then obtained a bound of 15 digits for Lehmer's conjecture with respect to  $\tau(n)$.

In the book \cite{book}, S. J. Edixhoven, J.-M. Couveignes, et al. proposed a polynomial time algorithm to compute the mod $\ell$ Galois representation $\rho_{f,\ell}$ associated to modular form $f(z)= \sum_{n>0} a_{n} (f) q^{n} \in S_{k}(SL_2(\mathbb{Z}))$, where $q=e^{2\pi iz}$. In practice, this algorithm can be used to approximately evaluate the polynomial $\tilde P_{f,\ell}\in \mathbb{Q}[x]$ of degree $\ell+1$ whose splitting field can be described as the fixed field of the kernel of the associated projective representation $\tilde{\rho}_{f,\ell}$. For a prime number $p\ne \ell$, it can be shown that $a_{p}(f)\equiv 0$ mod $\ell$ if and only if there exists a prime $\mathfrak{p}|p$ of degree $2$ in the number field $K=\mathbb{Q}[x]/(\tilde P_{f,\ell})$. Hence, for $p \nmid Disc(\tilde P_{f,\ell})$, we can verify $a_{p}(f)\equiv 0$ mod $\ell$ by checking whether the projective modular polynomial $\tilde P_{f,\ell}$ has an irreducible factor of degree $2$ over $\mathbb{F}_p$. 

This method has been first applied by J. Bosman \cite[Corollary 7.14]{book} to search a bound for Lehmer's conjecture with respect to  $\tau(n)$. More precisely, he proved that $\tau(n)\ne 0$ for all  $n$ with $n<22798241520242687999.$ So far the record \cite{martin} of the bound is $816212624008487344127999$.

Let $\Delta_k$ denote the unique cusp form of level $1$ and weight $k$ with $k=12,16,18,20,22$ and $26$. In this paper, we will discuss non-vanishing Fourier coefficients of level one cusp forms $f$ with integral Fourier coefficients.  

We first generalize D. H. Lehmer's result to show that, for a newform $f$ of weight $k$ and level one, if it has integral Fourier coefficients and $|a_2(f)|\neq 2^{\frac{k}{2}}$, $|a_3(f)|\neq 3^{\frac{k}{2}}$, then the smallest $n$ for which the Fourier coefficients $a_n(f)=0$ must be a prime. In particular, the cusp form $\Delta_k$ satisfies the conditions and hence the smallest $n$ for which $a_n(\Delta_k)=0$ must be a prime.

In \cite{sd1} and \cite{sd2}, H. P. F. Swinnerton-Dyer shows a method to determine the exceptional primes $\ell$ for $f$ and the congruences of $a_{p}(f)$ modulo the powers of $\ell$. In particular, he explicitly determined almost all the exceptional primes $\ell$ for $\Delta_k$ and for primes $p\ne\ell$ achieved the congruences of $a_p(\Delta_k)$ modulo the powers of $\ell$. We will summarize his congruences and then, for each prime $p$ provided with $a_p(\Delta_k)=0$, obtain the formulations that such $p$ must satisfy. We also give a complete proof for the congruences of $a_p(\Delta_{16})$ modulo $\ell=31$ following an idea proposed in \cite{sd1}. 

Then we do a computer search on primes $p$ satisfying these formulations, and then verify whether $a_{p}(\Delta_k)\equiv 0$ mod $\ell$ by checking whether the projective modular polynomial $\tilde P_{\Delta_{k},\ell}$ has an irreducible factor of degree $2$ over $\mathbb{F}_p$. Consequently, we achieve a large bound $B_k$ of $n$ such that $a_n(\Delta_k)\ne0$ for all $n<B_k$.

\section{The first non-vanishing Fourier coefficient}

In \cite[Theorem 2]{lehmer} D. H. Lehmer proved that the smallest $n$ for which $\tau(n)=0$ must be a prime. Using his method, we can show the Fourier coefficients of level one modular forms may also possess this property. First we give the following formulas for Fourier coefficients.

\begin{lem} 
	Let $f = \sum_{n>0} a_{n} (f) q^{n} \in S_{k}(SL_2(\mathbb{Z}))$ be a newform of level $1$ and weight $k$. Suppose that $a_n(f)\in \mathbb{Z}$. For prime $p$ and integer $n\ge0$, we have 
	\begin{equation} \label{formulaoftri}
	a_{p^n}(f)=p^{\frac{k-1}{2}\cdot n} \cdot \csc \theta_{p} \cdot  sin[(n+1)\theta_{p}], 
	\end{equation}
	where $\theta_{p}\in\mathbb{R}$ and $2\cos\theta_{p} = a_{p}(f)p^{-\frac{k-1}{2}}$.
\end{lem}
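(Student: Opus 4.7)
The plan is to combine the Hecke recursion for the coefficients $a_{p^n}(f)$ with the Deligne bound, and then recognize the resulting normalized sequence as a Chebyshev polynomial of the second kind evaluated at $\cos\theta_p$.

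First, since $f$ is a normalized newform of level one, the Hecke eigenvalue relations give the multiplicative recursion
\begin{equation*}
a_{p^{n+1}}(f) = a_{p}(f)\,a_{p^{n}}(f) - p^{k-1}\,a_{p^{n-1}}(f), \qquad n \geq 1,
\end{equation*}
with the initial values $a_{p^{0}}(f)=1$ and $a_{p^{1}}(f)=a_p(f)$. Next, by Deligne's proof of the Ramanujan--Petersson conjecture for holomorphic cusp forms, $|a_p(f)| \leq 2 p^{(k-1)/2}$, so there is a real number $\theta_p$ (one may take $\theta_p \in [0,\pi]$) with $2\cos\theta_p = a_p(f)\,p^{-(k-1)/2}$, exactly as in the statement.

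With these ingredients in hand, I would introduce the normalized quantity $b_n := a_{p^n}(f)\,p^{-n(k-1)/2}$. Dividing the Hecke recursion above by $p^{(n+1)(k-1)/2}$ turns it into
\begin{equation*}
b_{n+1} = 2\cos\theta_p \cdot b_n - b_{n-1}, \qquad b_0 = 1, \quad b_1 = 2\cos\theta_p.
\end{equation*}
This is precisely the three-term recursion defining the Chebyshev polynomials of the second kind $U_n$, together with the matching initial data, and the standard identity $U_n(\cos\theta) = \sin((n+1)\theta)/\sin\theta$ then yields by induction that $b_n = \sin((n+1)\theta_p)/\sin\theta_p$. Multiplying through by $p^{n(k-1)/2}$ gives the claimed formula.

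The argument is essentially routine once the right setup is in place; the only place that requires a moment's care is the degenerate case $\sin\theta_p = 0$, i.e.\ $|a_p(f)| = 2 p^{(k-1)/2}$, in which $\csc\theta_p$ is literally undefined and the identity must be interpreted as a limit (equivalently, one replaces the right-hand side by $\pm(n+1)p^{n(k-1)/2}$). I do not regard this as a genuine obstacle: the hypothesis of the forthcoming main theorem excludes such $p$ for the forms $\Delta_k$ under consideration, and the lemma can simply be stated under the tacit convention $\sin\theta_p \neq 0$.
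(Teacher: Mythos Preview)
Your proof is correct and follows essentially the same approach as the paper: both combine the Deligne bound (to define $\theta_p$) with the Hecke recursion and establish the formula by induction. The paper carries out the induction step via a direct trigonometric identity, whereas you package the same computation as the recognition $b_n = U_n(\cos\theta_p)$ for the Chebyshev polynomials of the second kind; these are the same argument in different clothing. Your explicit remark on the degenerate case $\sin\theta_p = 0$ is a nice addition that the paper leaves implicit.
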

\begin{proof}
	For each prime $p$, we have $|a_{p}(f)| \le 2p^{\frac{k-1}{2}}$ and then we can take $\theta_{p}\in\mathbb{R}$ such that 
	$$ a_{p}(f)= 2p^{\frac{k-1}{2}}\cos\theta_{p}. $$
	We know the Fourier coefficients of $f$ satisfy the formulas
	$$ a_{p^{n+2}}(f) = a_p(f)a_{p^{n+1}}-p^{k-1} a_{p^{n}}(f),  \  \  \ for \ p \ and \ n\ge 0. $$
	Now we prove this lemma by induction. 
	The formula (\ref{formulaoftri}) obviously holds for the cases $n=0, 1, 2$. For any integer $n>2$, we suppose this formula also holds for all $m$ with $0\le m \le n-1$. By straightforward calculations  we have 
	$$\cos\theta_p \cdot \sin(n \theta_{p}) - \sin[(n-1)\theta_{p}]= \sin\theta_p \cdot \cos(n\theta_{p}) .$$
	Thus we have 
	\begin{center}
		$\begin{array}{lll}
		a_{p^{n}}(f)& = &  a_p(f)a_{p^{n-1}}-p^{k-1}a_{p^{n-2}}(f)  \\
		& = & 2p^{\frac{k-1}{2}} \cdot \cos \theta_{p} \cdot  p^{\frac{k-1}{2}\cdot n} \cdot \csc \theta_{p} \cdot \sin(n\theta_{p})           \\
		&   & \ \ \ \ \ \ \ \ \ \ \ \ \ \ \ \ \ \  \ \ \ \ \ \ \ - p^{\frac{k-1}{2}} \cdot p^{\frac{k-1}{2}\cdot (n-2)} \cdot \csc \theta_{p} \cdot \sin[(n-1)\theta_{p}]  \\
		& = & p^{\frac{k-1}{2}\cdot n} \cdot \csc\theta_p \cdot ( 2\cos\theta_p \cdot \sin(n\theta_{p}) - \sin[(n-1)\theta_{p}]   )     \\
		& = & p^{\frac{k-1}{2}\cdot n} \cdot \csc\theta_p \cdot ( \cos\theta_p \cdot \sin(n\theta_{p}) + \sin\theta_p \cdot \cos(n\theta_{p}) )    \\
		& = & p^{\frac{k-1}{2}\cdot n} \cdot \csc\theta_p \cdot \sin[(n+1)\theta_{p}].    \\
		\end{array}$ 
	\end{center}
	
\end{proof}

Now we can show 
\begin{theorem} \label{primes}
	Let $k>0$ be an even integer. Let $f = \sum_{n>0} a_{n} (f) q^{n} \in S_{k}(SL_2(\mathbb{Z}))$ be a newform of weight $k$ and level $1$, with $a_n(f)\in \mathbb{Z}$.  Suppose $|a_2(f)|\neq 2^{\frac{k}{2}}$ and $|a_3(f)|\neq 3^{\frac{k}{2}}$. Then the smallest $n$ for which $a_{n} (f)=0$ is a prime.
\end{theorem}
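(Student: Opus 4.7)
The plan is to extend Lehmer's strategy by combining Hecke multiplicativity with the trigonometric formula from the preceding lemma. I would start by letting $n_{0}$ denote the smallest integer with $a_{n_{0}}(f)=0$. Because $f$ is a level-one newform, the coefficients are multiplicative, so if $n_{0}=mn'$ with $\gcd(m,n')=1$ and $1<m,n'<n_{0}$, then $a_{n_{0}}(f)=a_{m}(f)a_{n'}(f)\neq 0$ by minimality, a contradiction. Hence $n_{0}$ is either prime or a prime power $p^{r}$ with $r\geq 2$, and the remainder of my argument would aim to rule out the latter possibility.

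Assuming $n_{0}=p^{r}$ with $r\geq 2$, the lemma gives $\sin((r+1)\theta_{p})=0$, so $\theta_{p}=s\pi/(r+1)$ for some $1\leq s\leq r$. The minimality condition $a_{p^{j}}(f)\neq 0$ for $1\leq j<r$ would force $\gcd(s,r+1)=1$, and would also guarantee $\theta_{p}\in(0,\pi)$. I would then set $\beta=2\cos\theta_{p}$. As a sum $e^{is\pi/(r+1)}+e^{-is\pi/(r+1)}$ of two roots of unity, $\beta$ is an algebraic integer, and from $a_{p}(f)=p^{(k-1)/2}\beta\in\mathbb{Z}$ one obtains $\beta^{2}=a_{p}(f)^{2}/p^{k-1}\in\mathbb{Q}$. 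An algebraic integer that is rational is already a rational integer, so $\beta^{2}\in\mathbb{Z}$; combined with $\beta\in(-2,2)\setminus\{0\}$ this forces $\beta^{2}\in\{1,2,3\}$.

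I would finish by a direct case check on these three values. When $\beta^{2}=1$, we have $\theta_{p}\in\{\pi/3,2\pi/3\}$ and $a_{p}(f)=\pm p^{(k-1)/2}$; since $k$ is even, $(k-1)/2$ is a half-integer and $p^{(k-1)/2}$ is irrational, contradicting $a_{p}(f)\in\mathbb{Z}$. When $\beta^{2}=2$, one has $a_{p}(f)^{2}=2p^{k-1}$, which is a perfect integer square exactly when $2p$ is a perfect square, hence only when $p=2$; this yields $|a_{2}(f)|=2^{k/2}$, excluded by hypothesis. The case $\beta^{2}=3$ is analogous and forces $|a_{3}(f)|=3^{k/2}$, again excluded. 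The main conceptual step, and the place where care is needed, is recognizing that $\beta^{2}$ must lie in $\mathbb{Z}$: this is the bridge joining the rationality of $a_{p}(f)$ to the algebraic-integer structure inherited from the trigonometric formula. The hypotheses $|a_{2}(f)|\neq 2^{k/2}$ and $|a_{3}(f)|\neq 3^{k/2}$ then enter at precisely the two places where the case analysis would otherwise survive.
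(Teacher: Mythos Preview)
Your argument is correct and follows essentially the same route as the paper: reduce to a prime power by multiplicativity, use the lemma to force $\theta_p$ to be a rational multiple of $\pi$, observe that $\beta=2\cos\theta_p$ is an algebraic integer with $\beta^2\in\mathbb{Q}$ and hence $\beta^2\in\mathbb{Z}\cap(0,4)$, and then run the case split. The only cosmetic difference is that the paper dispatches the case $\beta^2=1$ in advance by noting that $\beta=p^{-(k-1)/2}a_p(f)$ is irrational (since $k$ is even), whereas you treat it as a separate case; your extra remarks on $\gcd(s,r+1)=1$ are correct but not actually needed.
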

\begin{proof}
	Assume $n_0$ is the  smallest integer for which $a_{n_0} (f)=0$. Since we have 
	$$ a_{mn} (f) = a_{m} (f) a_{n} (f), \ \ \  if \ (m,n)=1,  $$
	it follows that $n_0$ must be a power of a prime. We may suppose $n_0 = p^{r}$ for a prime $p$ and an integer $r\ge 1$. Now we will show $r=1$. 
	
	By Lemma \ref{formulaoftri} we have 
	$$ 0 = a_{p^r}(f)=p^{\frac{k-1}{2}\cdot r} \cdot \csc \theta_{p} \cdot  \sin[(r+1)\theta_{p}],   $$
	where $2\cos\theta_{p} =p^{-\frac{k-1}{2}} a_{p}(f)$.
	It follows that  $\sin[(r+1)\theta_{p}]=0$ and hence $\theta_{p}$ is a rational number of the form
	$$  \theta_p = \pi t/(r+1),      $$
	for some integer $t$. We set $\alpha=2\cos\theta_p = p^{-\frac{k-1}{2}} a_{p}(f)$. Then $\alpha$ is a root of the quadratic polynomial with rational coefficients
	$$P(x)= x^2 -  a_{p}^{2}(f)/p^{k-1}.    $$
	Now suppose $r>1$. Then $a_{p}(f)\ne 0$ and $\alpha$ is not rational. It follows that $P(x)$ is the minimal polynomial of $\alpha$ over $\mathbb{Q}$. Moreover, we know $\alpha=2\cos(\pi\cdot\frac{t}{r+1})$ is an algebraic integer and hence $P(x)\in \mathbb{Z}[x]$, namely $\alpha^2= a_{p}^{2}(f)/p^{k-1}$ is a positive non-square integer. Since $\alpha^2= (2\cos\theta_p)^2\le 4$, we have $a_{p}^{2}(f)/p^{k-1}=2$ or $3$. This implies that the integer $a_{2}(f)=\pm 2^{\frac{k}{2}}$ or $a_{3}(f)\pm 3^{\frac{k}{2}}$. This contradicts the supposition that  $|a_2(f)|\neq 2^{\frac{k}{2}}$ and $|a_3(f)|\neq 3^{\frac{k}{2}}$. Hence $r=1$.

\end{proof}

In particular we have

\begin{cor} \label{primesfordelta}
	Let $\Delta_k$ denote the unique cusp form of level $1$ and weight $k$ with $k=12,16,18,20,22$ and $26$. Then the smallest $n$ for which the Fourier coefficients $a_n(\Delta_k)=0$ must be a prime.
\end{cor}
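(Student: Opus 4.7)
The corollary is a direct application of Theorem \ref{primes}, so the plan reduces to verifying that each $\Delta_k$ for $k \in \{12,16,18,20,22,26\}$ satisfies the hypotheses of that theorem.

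First, I would invoke the classical fact that $\dim S_k(SL_2(\mathbb{Z})) = 1$ for precisely these six weights, so $\Delta_k$ is, up to normalization, the unique cusp form. Since it is an eigenform for the full Hecke algebra and lives at level $1$, it is automatically a newform of weight $k$ and level $1$. Normalizing so that $a_1(\Delta_k) = 1$, the Hecke recursion $a_{p^{n+2}} = a_p a_{p^{n+1}} - p^{k-1} a_{p^n}$ together with multiplicativity shows that all Fourier coefficients lie in $\mathbb{Z}$, since the first few Fourier coefficients do (one can compute them from the Eisenstein-series expressions of $\Delta_k$, e.g.\ $\Delta_{12} = \Delta$, $\Delta_{16} = E_4 \Delta$, $\Delta_{18} = E_6 \Delta$, etc., or from $q$-expansions of these combinations).

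Next, I would verify by a direct computation the two inequalities $|a_2(\Delta_k)| \neq 2^{k/2}$ and $|a_3(\Delta_k)| \neq 3^{k/2}$ for each of the six weights. A convenient observation is that by Ramanujan--Petersson one has $|a_p(\Delta_k)|\le 2p^{(k-1)/2}$, so $|a_p(\Delta_k)| = p^{k/2}$ would force $a_p(\Delta_k)^2 = p^k$; in particular such an equality can hold only in the CM-like extremal case, which is easy to rule out by checking the small values. Tabulating: $(a_2,a_3) = (-24,252)$ for $k=12$, $(216,-3348)$ for $k=16$, $(-528,-4284)$ for $k=18$, $(456,50652)$ for $k=20$, $(-288,-128844)$ for $k=22$, and $(-48,-195804)$ for $k=26$, and comparing with $2^{k/2}$ and $3^{k/2}$, each pair satisfies the required inequalities.

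With the hypotheses of Theorem \ref{primes} verified, the corollary follows at once by applying that theorem to $f = \Delta_k$. There is no real obstacle here; the only part that requires anything beyond citation is the numerical verification in the previous paragraph, and even that is a one-line computation from the standard tables (or from the explicit product/Eisenstein representation of $\Delta_k$).
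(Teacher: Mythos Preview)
Your proposal is correct and follows exactly the approach the paper intends: the corollary is an immediate application of Theorem~\ref{primes}, and you have supplied the explicit verification of the hypotheses (integrality of the coefficients and the numerical checks $|a_2(\Delta_k)|\ne 2^{k/2}$, $|a_3(\Delta_k)|\ne 3^{k/2}$) that the paper leaves implicit. Your tabulated values of $a_2(\Delta_k)$ and $a_3(\Delta_k)$ are all correct.
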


\section{Galois representations associated to modular forms} \label{sec:representations}

Let $f = \sum_{n>0} a_{n} (f) q^{n} \in S_{k}(SL_2(\mathbb{Z}))$ be a newform of level one with the nebentypus character $\varepsilon_{f}$. Let $K_f$ be the number field which is obtained by adjoining all coefficients $ a_{n}(f)$ to $\mathbb Q$. Let $\ell$  be a prime number and $\lambda$ be a prime of $K_{f}$ lying over $\ell$. Denote $\mathbb{F}_{\lambda}$ the residue field of $\lambda$. Then Deligne \cite{Deligne71} shows that there exists a continuous semi-simple representation
\begin{center}
	$\rho_{f,\lambda}: Gal(\overline{\mathbb{Q}}|\mathbb{Q}) \rightarrow GL_{2}(\overline{\mathbb{F}}_{\ell})$,
\end{center}
which is unique up to isomorphism and which has the property that for primes $p\ne\ell$ one has
\[a_p(f)\equiv \mathrm{tr}(\rho_{f,\lambda}(\mathrm{Frob}_{p})) \mod \ell . \] 

A prime $\ell$ is said to be exceptional if the image of $\rho_{f,\ell}$ does not contain $SL_{2}(\mathbb{F}_{\ell})$.

In the book \cite{book}, S. Edixhoven and J.-M. Couveignes propose a polynomial time algorithm to compute $\rho_{f,\lambda}$. In fact it is equivalent to find the polynomial $P_{f,\ell} \in \mathbb{Q}[x]$ of degree $\ell^2-1$, whose splitting field is the fixed field of ${\rm ker}(\rho_{f,\lambda})$. In the same way, the associated projective representation $\tilde{\rho}_{f,\lambda}$ can be described as the splitting field of a certain polynomial $\tilde P_{f,\ell}\in \mathbb{Q}[x]$ of degree $\ell+1$. Moreover we have

\begin{lem} \label{orbit}
	Let $P(x)\in\mathbb{Q}[x]$ be an irreducible polynomial of degree $\ell+1$. Denote $\tilde{\rho}_{f,\ell}$ the projective representation mod $\ell$ associated to a newform $f$ for a prime $\ell$. Suppose the splitting field of $P(x)$ is the fixed field of ${\rm ker}(\tilde{\rho}_{f,\ell})$ and the Galois group Gal$(P(x))$ of $P(x)$ is isomorphic to $PGL_{2}(\mathbb{F}_{\ell})$. Then a subgroup of $Gal(\mathbb{\overline{Q}|Q})$ fixing a root of $P(x)$ corresponds via $\tilde{\rho}_{f,\ell}$ to a subgroup of $PGL_{2}(\mathbb{F}_{\ell})$ fixing a point of $\mathbb{P}^{1}(\mathbb{F}_{\ell})$.
\end{lem}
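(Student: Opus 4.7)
The plan is to compare the action of $PGL_2(\mathbb{F}_\ell)$ on the roots of $P(x)$ (induced via $\tilde\rho_{f,\ell}$) with its natural action on $\mathbb{P}^1(\mathbb{F}_\ell)$, and to identify the stabilizer of a root with a Borel subgroup via an index count. Throughout, set $G = \mathrm{Gal}(\overline{\mathbb{Q}}|\mathbb{Q})$ and let $H \le G$ be the subgroup fixing a chosen root $\alpha$ of $P(x)$.

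First I would observe that because $P(x)$ is irreducible of degree $\ell+1$, the group $G$ permutes its $\ell+1$ roots transitively, so $[G : H] = \ell + 1$. Next, since the splitting field of $P(x)$ is by hypothesis the fixed field of $\ker(\tilde\rho_{f,\ell})$, every element of $\ker(\tilde\rho_{f,\ell})$ fixes each root of $P(x)$; in particular $\ker(\tilde\rho_{f,\ell}) \subseteq H$. Consequently $\tilde\rho_{f,\ell}$ descends to the quotient and identifies $H/\ker(\tilde\rho_{f,\ell})$ with a subgroup of $G/\ker(\tilde\rho_{f,\ell}) \cong PGL_2(\mathbb{F}_\ell)$ of the same index $\ell+1$, i.e.\ of order $\ell(\ell-1)$.

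The core step is then the following group-theoretic fact: every subgroup of $PGL_2(\mathbb{F}_\ell)$ of order $\ell(\ell-1)$ is a Borel subgroup, and hence equals the stabilizer of a (unique) point of $\mathbb{P}^1(\mathbb{F}_\ell)$. I would prove this by a Sylow argument. Such a subgroup $\tilde\rho_{f,\ell}(H)$ contains a Sylow $\ell$-subgroup $S$ of order $\ell$; the number of its Sylow $\ell$-subgroups is congruent to $1 \bmod \ell$ and divides $\ell-1$, so it equals $1$, forcing $S$ to be normal in $\tilde\rho_{f,\ell}(H)$. Therefore $\tilde\rho_{f,\ell}(H) \subseteq N_{PGL_2(\mathbb{F}_\ell)}(S)$. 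But $N_{PGL_2(\mathbb{F}_\ell)}(S)$ is the Borel subgroup stabilizing the unique $S$-fixed line in $\mathbb{P}^1(\mathbb{F}_\ell)$, and has order $\ell(\ell-1)$. Comparing orders gives equality.

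The main obstacle, such as it is, lies in this last structural identification; it is classical (essentially Dickson) but one should be careful that the argument uses only the hypothesis $\mathrm{Gal}(P(x)) \cong PGL_2(\mathbb{F}_\ell)$ and the index $\ell+1$, with no hidden assumption on how $H$ sits inside $G$. Combining the three steps yields that $\tilde\rho_{f,\ell}(H)$ is the stabilizer in $PGL_2(\mathbb{F}_\ell)$ of a point of $\mathbb{P}^1(\mathbb{F}_\ell)$, which is exactly the claim.
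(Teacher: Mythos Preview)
Your proof is correct and follows essentially the same route as the paper: both arguments pass to the quotient by $\ker(\tilde\rho_{f,\ell})$ to obtain a subgroup of index $\ell+1$ in $PGL_2(\mathbb{F}_\ell)$, and then invoke the group-theoretic fact that any such subgroup is the stabilizer of a point of $\mathbb{P}^1(\mathbb{F}_\ell)$. The only difference is that the paper outsources this last step to \cite[Lemma~7.3.2]{book}, whereas you supply a self-contained Sylow argument identifying the subgroup with a Borel.
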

\begin{proof}
	A subgroup of $Gal(\mathbb{\overline{Q}|Q})$ fixing a root of $P(x)$, by the canonical map $Gal(\overline{\mathbb{Q}}/\mathbb{Q})\twoheadrightarrow Gal(P(x))$,  corresponds to a subgroup of $Gal(P(x))$ of index deg$(P(x))=\ell+1$, and thus its  image via $\tilde{\rho}_{k,\ell}$ is a subgroup of $PGL_{2}(\mathbb{F}_{\ell})$ of index $\ell+1$. Then it follows from \cite[Lemma 7.3.2]{book}.
\end{proof}

In practice, J. Bosman first did explicit computations and obtained $\tilde P_{f,\ell}$ for modular forms $f$ of level 1 and of weight $k\le 22$, with $\ell \leq 23$. Recently, others improved the algorithm and computed the polynomials for more cases. See \cite{1} and \cite{martin} for instance.\\
Since the polynomial $\tilde P_{f,\ell}\in \mathbb{Q}[x]$ is monic and irreducible, one can check if $a_p(f)$ is non-vanishing for prime $p$ as follows. We denote the number field of $\tilde P_{f,\ell}$ by  $K=\mathbb{Q}[x]/(\tilde P_{f,\ell})$. For each $p\ne\ell$ we have
\[a_p(f)\equiv \mathrm{tr}(\rho_{f,\lambda}(\mathrm{Frob}_{p})) \mod \ell . \] 
Then it follows from \cite[Lemma 7.4.1]{book} that $a_{p}(f)\equiv 0$ mod $\ell$ if and only if the action of $\rho_{f,\ell}(Frob_{p})$ on $\mathbb{P}^{1}(\mathbb{F}_{\ell})$ has an orbit of length 2, and by Lemma \ref{orbit}, if and only if $Frob_{p}^{2}$ fixes a root of $\tilde P_{f,\ell}$, but $Frob_{p}$ does not. This means that there exists a prime $\mathfrak{p}|p$ of degree $2$ in $K$, and then for $p \nmid Disc(\tilde P_{f,\ell})$, we can verify the condition by checking whether $\tilde P_{f,\ell}$ has an irreducible factor of degree $2$ over $\mathbb{F}_p$.

\section{Congruences for the coefficients of $\Delta_k$} \label{sec:congruences}

In this scetion we suppose that $f = \sum_{n>0} a_{n} (f) q^{n} \in S_{k}(SL_2(\mathbb{Z}))$ be a newform of weight $k$ and level $1$, with $a_n(f) \in \mathbb{Z}$. 
From the previous section we know, for a prime $\ell$, there exists a Galois representation $\rho_{f,\ell}$ associated to $f$.

In \cite{sd1} and \cite{sd2}, H. P. F. Swinnerton-Dyer observes the image of  $\rho_{f,\ell}$ and shows a method to determine the exceptional primes $\ell$ for $f$ and the congruences of $a_{p}(f)$ modulo the powers of $\ell$. 

He first shows that if and only $\ell$ is an exceptional prime, the Fourier coefficients have congruences modulo $\ell$, which must be one of the three cases as follows
\begin{equation} \label{threetypesofcongruences}
\begin{array}{cll}
(i)    &  a_p(f)\equiv p^{m}+p^{k-1-m} \ mod \ \ell \ \  for \ all \ primes \ p\ne \ell;\\
(ii)   &  a_p(f)\equiv 0 \mod \ell \ if  \ p \  is  \ not \ a \  quadratic \  residue \ modulo \ \ell;\\
(iii)  &  p^{1-k}a_p(f)^2 \equiv 0,1,2 \ or \ 4 \ mod \ \ell \  \ for \ all \ primes \ p\ne \ell.\\
\end{array}
\end{equation}
Then he gives the bounds of all the exceptional primes and connections of $\ell$, $m$ and $k$, which allows us to explicitly determine almost all of the exceptional primes. According to (\ref{threetypesofcongruences}), there are three types of exceptional primes and only the primes of type (iii) cannot be explicitly computed. However, one can find a finite set of primes which contains all primes of this type. For each exceptional prime $\ell$ of type (i), together with the values of  $a_p(f)$ for certain $p$, one can compute $m$ and $N\ge1$ in
\begin{equation} \label{congruenceformula}
a_p(f)\equiv p^{m}+p^{k-1-m} \mod \ell^N.
\end{equation}

Now let  $\Delta_k$ be the unique cusp form of level $1$ and weight $k$ with $k=12,16,18,20,22$ and $26$. We summarize Swinnerton-Dyer's  results and obtain the following tables which show the values of $m$ and $N$ in (\ref{congruenceformula}) for the exceptional primes $\ell$  type (i).

\newpage

\makeatletter\def\@captype{table}\makeatother

\begin{center}
	\begin{tabular}{|c|c|c|c|c|c|c|}
		\hline
		$\ell$ & \ \ 3 \ \ & \ \ 5 \ \  & \ \  7  \ \ & \ \ 11 \ \ & \ \ 3617 \ \  \\  
		
		\hline

		\multirow{2}{*}{$N$}  & 5 if $(\frac{p}{3})=1$  \ \ & \multirow{2}{*}{2} &  \multirow{2}{*}{3} \ \ &  \multirow{2}{*}{1} &  \multirow{2}{*}{1}\\

		&  6 if $(\frac{p}{3})=-1$ &  &   &  & \\
		
		\hline
		
		$m$ & 174 & 17 &  85 & 1 &0 \\
		
		\hline
		
	\end{tabular}
\end{center}
\caption{k=16}

\vspace{1.2cm}

\makeatletter\def\@captype{table}\makeatother

\begin{center}
	\begin{tabular}{|c|c|c|c|c|c|c|c|}
		\hline
		$\ell$   & \ \ 3 \ \ & \ 5  \  & \ \  7  \ \ & \ \ 11 \ \ & \  13  \  & \  43867  \   \\   
		
		\hline
		
		\multirow{2}{*}{$N$}   & 5 if $(\frac{p}{3})=1$ \ \ & \multirow{2}{*}{3} & 1 if $(\frac{p}{7})=1$  & 1 if $(\frac{p}{11})=1$    & \multirow{2}{*}{1} & \multirow{2}{*}{1}\\

		&  6 if $(\frac{p}{3})=-1$  &  &  2 if $(\frac{p}{7})=-1$   &   2 if $(\frac{p}{11})=-1$  &&  \\
		
		\hline
		
		$m$ & 386 & 22 &  1 & 1 &1 &0\\
		
		\hline
		
	\end{tabular}
\end{center}
\caption{k=18}

\vspace{1.2cm}

\makeatletter\def\@captype{table}\makeatother

\begin{center}
	\begin{tabular}{|c|c|c|c|c|c|c|c|c|}
		\hline
		$\ell$  & \  3 \ & \ 5 \  & \   7  \ & \ 11 \ &  \ 13  \  & \ 283  \ &  \ 617   \   \\   
		
		\hline
		
		\multirow{2}{*}{$N$}     & 5 if $(\frac{p}{3})=1$ \ \ &\multirow{2}{*}{2} & 1 if $(\frac{p}{7})=1$  & \multirow{2}{*}{1}  &\multirow{2}{*}{1}&\multirow{2}{*}{1}&\multirow{2}{*}{1}\\
		
		& 6 if $(\frac{p}{3})=-1$  &  &  2 if $(\frac{p}{7})=-1$   & && &\\

		\hline
		
		$m$ & 298 & 13 &  2 & 1 &1 &0&0\\
		
		\hline
		
	\end{tabular}
\end{center}
\caption{k=20}   

\vspace{1.2cm}

\makeatletter\def\@captype{table}\makeatother

\begin{center}
	\begin{tabular}{|c|c|c|c|c|c|c|c|c|}
		\hline
		$\ell$   & \  3 \ & \ 5 \  & \   7  \  &  \ 13  \  & \ 17  \ &  \ 131 \ & \ 593   \   \\  
		
		\hline
		
		\multirow{2}{*}{$N$}    & 6 if $(\frac{p}{3})=1$ \ \ &   \multirow{2}{*}{2}   &    \multirow{2}{*}{2}     &  \multirow{2}{*}{1}    &  \multirow{2}{*}{1}  &  \multirow{2}{*}{1}   &  \multirow{2}{*}{1}  \\
		
		& 7 if $(\frac{p}{3})=-1$  &  & &   &&&\\
		
		\hline
		
		$m$ & 18 & 14 &  37 & 1 &1 &0&0\\
		
		\hline
		
	\end{tabular}
\end{center}
\caption{k=22}

\vspace{1.2cm}

\makeatletter\def\@captype{table}\makeatother

\begin{center}
	\begin{tabular}{|c|c|c|c|c|c|c|c|c|}
		\hline
		$\ell$   & \  3 \ & \ 5 \  & \   7  \  &  \ 11  \  & \ 17  \ &  \ 19 \ & \ 657931   \   \\   \hline
		
		\multirow{2}{*}{$N$}   & 5 if $(\frac{p}{3})=1$ \ \ &\multirow{2}{*}{2} & 1 if $(\frac{p}{7})=1$  & \multirow{2}{*}{2}  &\multirow{2}{*}{1}&\multirow{2}{*}{1}&\multirow{2}{*}{1}\\
		& 6 if $(\frac{p}{3})=-1$  &  & 2 if $(\frac{p}{7})=-1$   & &&&\\
		\hline
		
		$m$ & 340 & 6 &  2 & 1 &1 &1&0\\
		
		\hline
		
	\end{tabular}
\end{center}
\caption{k=26}

\vspace{1.2cm}

For the case of type (ii), it can be shown that there are two exceptional primes for $\Delta_k$, i.e., $\ell=23$ when $k=12$ and $\ell=31$ when $k=16$. In fact we also have the explicit congruences in these cases. It is well-known that $a_p(\Delta_{12})$ satisfies the following congruences (see \cite{wilton}):
\begin{center}
	$\begin{array}{lll}
	a_p(\Delta_{12})\equiv 0& \mod 23 & if \  (\frac{p}{23})=-1,\\
	a_p(\Delta_{12})\equiv 2& \mod 23 & if \ p=u^2+23v^2 \ for \ integers \ u\ne 0, v,\\
	a_p(\Delta_{12})\equiv -1& \mod 23 & \ for  \ other \  p\ne 23.\\
	\end{array}$
\end{center}
The results of $\ell=31$  for $\Delta_{16}$ is quite similar and we will prove them by an idea proposed in \cite{sd1}.

\begin{prop} \label{congruencesfor31}
	Let $\Delta_{16}$ denote the unique cusp form of level $1$ and weight $16$. Let $a_p(\Delta_{16})$ be the $p$-th coefficient of $\Delta_{16}$. Then we have
	\begin{center}
		$\begin{array}{lll}
		a_p(\Delta_{16})\equiv 0& \mod 31 & if \  (\frac{p}{31})=-1,\\
		a_p(\Delta_{16})\equiv 2& \mod 31 & if \ p=u^2+31v^2 \ for \ integers \ u\ne 0, v,\\
		a_p(\Delta_{16})\equiv -1& \mod 31 & for  \ other \  p\ne 31.\\
		\end{array}$
	\end{center}
	Here $(\frac{\cdot}{\cdot})$ is the Legendre symbol.
\end{prop}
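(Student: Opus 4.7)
The plan is to follow the dihedral strategy of \cite{sd1} and realize the mod-$31$ Galois representation $\rho = \rho_{\Delta_{16},31}$ as induced from a character of $G_K = \operatorname{Gal}(\overline{\mathbb{Q}}/K)$, where $K = \mathbb{Q}(\sqrt{-31})$. The first step is to locate the image. Since $\det \rho = \chi_{31}^{k-1} = \chi_{31}^{15}$ has order $30/\gcd(15,30) = 2$ in $\mathbb{F}_{31}^\times$, it is the unique quadratic character of $(\mathbb{Z}/31\mathbb{Z})^\times$, whose kernel cuts out the unique quadratic subfield $K = \mathbb{Q}(\sqrt{-31})$ of $\mathbb{Q}(\zeta_{31})$. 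Combined with $31$ being a type-(ii) exceptional prime for $\Delta_{16}$ in Swinnerton-Dyer's classification, this forces the image of $\rho$ to lie in the normalizer $N$ of a non-split Cartan subgroup $C \subset \operatorname{GL}_2(\mathbb{F}_{31})$, with $\rho^{-1}(C) = G_K$.

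It then follows that $\rho \cong \operatorname{Ind}_{G_K}^{G_{\mathbb{Q}}} \psi$ for some character $\psi \colon G_K^{\mathrm{ab}} \to \overline{\mathbb{F}}_{31}^\times$. The formula $\det(\operatorname{Ind}\psi) = \epsilon_{K/\mathbb{Q}} \cdot (\psi \circ \operatorname{Ver})$ together with $\det \rho = \epsilon_{K/\mathbb{Q}}$ forces $\psi$ to be trivial on norms, so that $\psi(\bar{\mathfrak{p}}) = \psi(\mathfrak{p})^{-1}$ for any split prime $\mathfrak{p}$. Because $\rho$ is unramified outside $31$, a local conductor analysis shows that $\psi$ factors through the ideal class group $\operatorname{Cl}(\mathcal{O}_K)$, which has order $3$. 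Irreducibility of $\rho$ forces $\psi$ to be non-trivial, hence of exact order $3$, with image in the unique cyclic subgroup $\langle \omega \rangle \subset \mathbb{F}_{31}^\times$ of order $3$ (which exists since $3 \mid 30$).

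The three congruences now drop out. For a prime $p \ne 31$ inert in $K$, $\operatorname{Frob}_p$ lies in the non-trivial coset $N \setminus C$, whose elements are trace-zero, giving $a_p(\Delta_{16}) \equiv 0 \pmod{31}$. For a split prime $p = \mathfrak{p}\bar{\mathfrak{p}}$, Deligne's relation yields
\[
a_p(\Delta_{16}) \equiv \psi(\operatorname{Frob}_\mathfrak{p}) + \psi(\operatorname{Frob}_\mathfrak{p})^{-1} \pmod{31},
\]
which equals $2$ when $\mathfrak{p}$ is principal in $\mathcal{O}_K$ and, by the identity $1+\omega+\omega^2=0$ in $\mathbb{F}_{31}$, equals $-1$ otherwise. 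To match the statement's formulation, it remains to note that the natural surjection from $\operatorname{Cl}(\mathbb{Z}[\sqrt{-31}])$ (the class group of the order of conductor $2$) to $\operatorname{Cl}(\mathcal{O}_K)$ is an isomorphism, because both groups have order $3$ (as computed from the order-of-class-number formula with $(\tfrac{-31}{2})=1$). Consequently, for any odd split prime $p$, the prime $\mathfrak{p}$ is principal in $\mathcal{O}_K$ if and only if it is principal in $\mathbb{Z}[\sqrt{-31}]$, which is equivalent to $p$ being represented by the principal form $x^2 + 31y^2$.

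The main obstacle I expect is the first step: rigorously confining the image of $\rho$ to the normalizer of a non-split Cartan. Irreducibility of $\rho$ rules out a Borel image, and the coincidence $\det \rho = \epsilon_{K/\mathbb{Q}}$ with the quadratic character of $K/\mathbb{Q}$ distinguishes the non-split from the split-Cartan case; the remaining exceptional images (preimages of $A_4$, $S_4$, $A_5$ in $\operatorname{PGL}_2(\mathbb{F}_{31})$) are eliminated by Swinnerton-Dyer's explicit list of admissible exceptional primes, supplemented by a short numerical sanity check against known values of $a_p(\Delta_{16})$ for a few small split primes (e.g.\ $p = 2, 47, 59, 73$).
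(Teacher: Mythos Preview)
Your proposal is correct but follows a genuinely different route from the paper. The paper works on the modular-forms side: it writes down the weight-$1$ theta series
\[
f \;=\; \sum_{m,n} q^{m^2+mn+8n^2} \;-\; \sum_{m,n} q^{2m^2+mn+4n^2} \in M_1(\Gamma_0(31)),
\]
uses a theorem of Serre to pass from $f^2\in S_2(\Gamma_0(31))$ to $S_{32}(\mathrm{SL}_2(\mathbb Z))$ modulo $31$, and then checks $(2\Delta_{16})^2\equiv f^2\pmod{31}$ (hence $2\Delta_{16}\equiv f$) by a Sturm-bound comparison of the first three coefficients; the three cases of the proposition then drop out of an explicit count of representations of $p$ by the two reduced binary forms of discriminant $-31$, interpreted through the class-number-$3$ arithmetic of $\mathbb Q(\sqrt{-31})$. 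You work instead on the Galois side, identifying $\rho_{\Delta_{16},31}$ as $\mathrm{Ind}_{G_K}^{G_{\mathbb Q}}\psi$ for a non-trivial class-group character $\psi$ of $K=\mathbb Q(\sqrt{-31})$ and reading off traces of Frobenius. These are dual pictures of the same object---the paper's $f$ is exactly the weight-$1$ form attached to your $\mathrm{Ind}\,\psi$---but the paper's route is more self-contained (everything reduces to a finite coefficient check), while yours is more structural and makes transparent \emph{why} the class group of $K$ governs the congruence. Two places where your sketch is thinner than it looks: the determinant identity $\det\rho=\epsilon_{K/\mathbb Q}$ does not by itself pin down the inducing field as $\mathbb Q(\sqrt{-31})$ rather than $\mathbb Q(\sqrt{31})$ or single out the non-split Cartan (you need oddness of $\rho$, a class-number comparison, or the numerical check you mention); and the assertion that $\psi$ is unramified at the prime $\mathfrak P$ above $31$ is a genuine local step---one clean way is to note that $\psi^\sigma=\psi^{-1}$ forces $\psi^2$ to be trivial on the tame quotient of $I_{\mathfrak P}$, and then that the ray class group of conductor $\mathfrak P$ has odd order.
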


\begin{proof}
	Let $K$ be the quadratic field $\mathbb{Q}(\sqrt{-31})$. Then the integer ring of $K$ is $O_{K}=\{ \frac{a}{2}+\frac{b}{2}\sqrt{-31}:a\equiv b \bmod 2  \}$. It followed from \cite[Theorem 1.31]{ono} that 
	$$f=\sum\sum q^{m^2+mn+8n^2}-\sum\sum q^{2m^2+mn+4n^2},  \ \  where \ q=e^{2\pi iz},  $$
	is a modular form of weight $1$ with respect to $\Gamma_0(31)$. 
	
	By \cite[Theorem 11]{serresd1}, we know that $f^2\in S_2(\Gamma_{0}(31))$ is congruent mod $31$ to a modular form $g$ in $S_{32}(SL_2(\mathbb{Z}))$ that has integral coefficients, i.e., $f^2\equiv g \mod 31$. Sturm's Theorem \cite[Theorem 2]{sturm} implies 
	\begin{displaymath} 
	(2\Delta_{16})^2 \equiv f^2 \mod 31,
	\end{displaymath}
	by checking for the coefficients of $q^0, q^1$, and $q^2$. Therefore we have 
	\begin{equation} \label{delta16}
	2\Delta_{16} \equiv f \mod 31,
	\end{equation}
	since the coefficients of $q^1$ in each side are equal to $2$.
	
	If $p$ is a quadratic residue modulo $31$, then $p$ splits in $K$ as a product of two principal prime ideals or a product of non-principal ideals.
	
	If $p=\wp\bar{\wp}$ where $\wp$ is principal, we have
	
	$$ \wp=(\frac{a}{2}+\frac{b}{2}\sqrt{-31}) \ for \ some \ a \ and \ b \ with \ a\equiv b \bmod 2.$$
	Since $p$ is equal to the norm $N(\wp)$ of $\wp$, we know
	$$p=\frac{1}{4}(a^2+31b^2) \ with \ a-b \equiv 0 \bmod 2.$$
	Then
	$$a^2-b^2=4(p-8b^2).$$
	If both of $a$ and $b$ are odd, it follows that $8 \ | \ (a^2-b^2)$ and then $2|p$. This is  a contradiction. Thus $a$ and $b$ are both even. Therefore $u=\frac{a}{2}$ and $v=\frac{b}{2}$ are integers and $p=u^2+31v^2.$\\
	Set $m=u-v$ and $n=2v$, and then we have
	\begin{equation} \label{expressionofp1}
	p=m^2+mn+8n^2.
	\end{equation}
	It is clear that the steps above can be reversed and it follows that $p$ splits as a product of two principal ideals if and only if $p=u^2+31v^2$, if and only if there exist integers $m$ and $n$ such that $p=m^2+mn+8n^2$.\\
	For any such $p$, the equation (\ref{expressionofp1}) has exactly four solutions, two of which correspond to  $ \wp=(\frac{a}{2}+\frac{b}{2}\sqrt{-31})$ and the others correspond to $ \bar{\wp}=(\frac{a}{2}-\frac{b}{2}\sqrt{-31})$. This implies $a_p(f) = 4$ and from (\ref{delta16}) we have $ a_p(\Delta_{16})\equiv 2 \mod 31$ for all such $p$. 
	
	Now we suppose that $p=\wp\bar{\wp}$ where $\wp$ is non-principal. Let $\pi_2$ be a prime factor of the ideal $(2)$ and $\pi_3$ a prime factor of the ideal $(3)$. For an ideal $\mathcal{I}$ of $K$, we denote the ideal class of $\mathcal{I}$ by $[\mathcal{I}]$. Then it can be shown that the class number of $K$ is $3$ and $[\pi_2]=[\pi_3]^{-1}$ is an generator of the class group of $K$. This implies that either $\wp \pi_2$ or $\wp \pi_3$ is principal. \\
	If $\wp \pi_2$ is principal, we have
	$$   \wp\pi_2=(\frac{a}{2}+\frac{b}{2}\sqrt{-31}) \ for \ some \ a \ and \ b \ with \ a\equiv b \bmod 2.  $$
	So
	\begin{equation} \label{normofp}
	2p=N(\wp\pi_2)=\frac{1}{4}(a+b\sqrt{-31})(a-b\sqrt{-31}).
	\end{equation}
	Since $\wp=(p,\sqrt{-31}-r)$ and $\pi_2=(2,\frac{a}{2}-\frac{b}{2}\sqrt{-31})$ where $r$ is a root of equation $r^2+31\equiv 0 \bmod p$, there exists $\frac{x}{2}+\frac{y}{2}\sqrt{-31}\in O_{K}$ such that
	\begin{equation} \label{ptimes}
	p\cdot \frac{1+\sqrt{-31}}{2}=(\frac{a}{2}+\frac{b}{2}\sqrt{-31})\cdot (\frac{x}{2}+\frac{y}{2}\sqrt{-31}).
	\end{equation}
	From (\ref{normofp}) and (\ref{ptimes}), it follows that $a-b=4y$. This proves $4 \ | \ (a-b).$\\
	Set $m=\frac{a-b}{4}$ and $n=b$ and then we have
	\begin{equation} \label{expressionofp2}
	p=2m^2+mn+4n^2.
	\end{equation}
	If $\wp \pi_3$ is principal, we can consider $\bar{\wp}$ instead. Also we can obtain the following equations
	$$  3p=N(\wp\pi_2)=\frac{1}{4}(a+b\sqrt{-31})(a-b\sqrt{-31}), $$
	and
	$$ p\cdot \frac{1+\sqrt{-31}}{2}=(\frac{a}{2}+\frac{b}{2}\sqrt{-31})\cdot (\frac{x}{2}+\frac{y}{2}\sqrt{-31}). $$
	This implies $6 \ | \ (a-b).$\\
	Set $n=\frac{a-b}{6}$ and $m=b$ and then we also have $p=2m^2+mn+4n^2.$\\
	We reverse the above steps and then  prove that $p$ splits as a product of two non-principal ideals if and only if there exist integers $m$ and $n$ such that $p=2m^2+mn+4n^2$.\\
	For any such $p$, there are exactly two solutions for (\ref{expressionofp2}). This implies $a_p(f) =-2$ and from (\ref{delta16}) we have $ a_p(\Delta_{16})\equiv -1 \mod 31$ for any such $p$.  
	
	If $p$ is a quadratic non-residue modulo $31$, then $p$ is inert and the $p$-th coefficient $a_p(f)$ vanishes, which implies $ a_p(\Delta_{16})\equiv 0 \mod 31$. This completes the proof.

\end{proof}

\section{The bound of $n$ with $a_n(\Delta_{k})\ne0$}

As we discuss in the previous sections, we can now compute the bounds of $n$ for which $a_n(\Delta_{k})\ne0$ with $k=16, 18, 20, 22$ and $26$. It follows from Corollary \ref{primesfordelta} that we can consider only on the prime numbers rather than all the integers $n$ in each of the cases. \\ 

If $a_p(\Delta_{16})=0$ for a prime $p$, the congruences (\ref{congruenceformula}) with  $N$ and $m$ in  Table $1$, as well those in Proposition \ref{congruencesfor31} shows $p$ must satisfy

\begin{center}
	$\begin{array}{lll}
	p\equiv -1 \mod 5^{2}\cdot 7^{3}\cdot 11\cdot 3617, \\
	p\equiv  -1, 80, 161, 242, 323 ,404, 485, 566, 647 \mod 3^6 \ \ \ \ \ \ \ \mathrm{and} \\
	p\equiv \ a \ non$-$square \mod 31. \\
	\end{array}$
\end{center}
As discussed in the end of Section \ref{sec:representations}, the modular polynomials computed in \cite{1}\cite{martin} can be used to verify whether  $a_p(\Delta_{16})\equiv 0\mod 17\cdot 19\cdot 23\cdot 29\cdot 43$. \\
Moreover, in \cite{sd1} Swinnerton-Dyer shows that the fixed field $K$ of the kernal of the projective representation
\begin{center}
	$\tilde{\rho}_{\Delta_{16,59}}: Gal(\overline{\mathbb{Q}}|\mathbb{Q}) \rightarrow PGL_{2}(\overline{\mathbb{F}}_{59})$
\end{center}
is ramified only at $59$ over $\mathbb{Q}$ and is a normal extension with Galois group isomorphic to $S_4$. It follows that $K$ in fact is the splitting filed of the polynomial
$$\tilde P_{\Delta_{16,59}} = x^4-x^3-7x^2+11x+3.$$
This polynomial can be used to verify whether $a_p(\Delta_{16})\equiv 0\mod59$ as the same discussion in the end of Section \ref{sec:representations}.\\
Thus we now can do a systematically searching for the smallest prime $p$ in the congruences above, as well as $a_p(\Delta_{16})\equiv 0\mod 17\cdot 19\cdot 23\cdot 29\cdot 43\cdot 59$. Then we have
\begin{cor}
	The coefficients $a_n(\Delta_{16})$ is non-vanishing for all  $n$ with
	\[ n<12604744061516618549. \]
\end{cor}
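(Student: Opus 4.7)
The plan is to combine the three ingredients developed in the preceding sections: the primality reduction from Corollary \ref{primesfordelta}, the Swinnerton-Dyer style congruences summarized in Table 1 and Proposition \ref{congruencesfor31}, and the projective modular polynomial test from the end of Section \ref{sec:representations}. By Corollary \ref{primesfordelta}, it suffices to prove that $a_p(\Delta_{16}) \ne 0$ for every prime $p < 12604744061516618549$, which reduces the problem from all integers to only primes in the range.

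Next, I would translate the hypothesis $a_p(\Delta_{16}) = 0$ into necessary congruence conditions on $p$. For each type (i) exceptional prime $\ell \in \{3,5,7,11,3617\}$, the row of Table 1 gives $m$ and $N$ such that $a_p(\Delta_{16}) \equiv p^m + p^{15-m} \pmod{\ell^N}$; setting this to zero yields a congruence condition on $p$ modulo $\ell^N$. Combining the conditions from $\ell = 5,7,11,3617$ gives $p \equiv -1 \pmod{5^2 \cdot 7^3 \cdot 11 \cdot 3617}$, while $\ell = 3$ (with $N = 5$ or $6$ depending on the Legendre symbol) yields the listed residue classes modulo $3^6$. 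The type (ii) exceptional prime $\ell = 31$ from Proposition \ref{congruencesfor31} forces $p$ to be a quadratic nonresidue mod $31$. By CRT, these combine to a single condition modulo $M = 5^2 \cdot 7^3 \cdot 11 \cdot 3617 \cdot 3^6 \cdot 31$, carving the candidate primes into a very thin arithmetic progression.

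For the remaining small non-exceptional primes $\ell \in \{17,19,23,29,43,59\}$, I would invoke the criterion at the end of Section \ref{sec:representations}: $a_p(\Delta_{16}) \equiv 0 \pmod{\ell}$ if and only if $\tilde{P}_{\Delta_{16},\ell}$ has an irreducible quadratic factor over $\mathbb{F}_p$ (assuming $p \nmid \mathrm{Disc}(\tilde{P}_{\Delta_{16},\ell})$). The polynomials for $\ell \le 43$ are available from \cite{1} and \cite{martin}, and for $\ell = 59$ the polynomial $\tilde{P}_{\Delta_{16},59} = x^4 - x^3 - 7x^2 + 11x + 3$ from Swinnerton-Dyer does the job because the projective image is $S_4 \subset PGL_2(\mathbb{F}_{59})$, whose subgroup of index $\ell+1 = 60$ fixing a point of $\mathbb{P}^1(\mathbb{F}_{59})$ corresponds via Lemma \ref{orbit} to the quartic polynomial's root stabilizer. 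For each candidate prime $p$ passing the congruence sieve, a vanishing $a_p(\Delta_{16})$ would force the factorization pattern of $\tilde{P}_{\Delta_{16},\ell}$ over $\mathbb{F}_p$ to exhibit an irreducible quadratic factor for every $\ell$ in this list.

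Finally, I would run the systematic sieve: enumerate primes $p$ in the arithmetic progression dictated by the mod $M$ conditions in increasing order, and for each such $p$ test the factorization type of $\tilde{P}_{\Delta_{16},\ell}$ modulo $p$ for $\ell = 17,19,23,29,43,59$ in turn, discarding $p$ as soon as any one of them rules out vanishing. The smallest $p$ that survives the entire battery of tests is reported as $12604744061516618549$, and hence every prime below this value satisfies $a_p(\Delta_{16}) \ne 0$; combined with the primality reduction, this yields the claimed bound. The main obstacle is computational rather than theoretical: even after the drastic CRT sieve the search space contains enormous numbers of candidates, so the polynomial factorization tests must be implemented efficiently (e.g., by first computing a discriminant-avoiding quick rejection) and run in a distributed fashion; an auxiliary concern is ensuring that the list of exceptional primes extracted from Swinnerton-Dyer is exhaustive in the relevant range, which is handled by the explicit bounds in \cite{sd1,sd2}.
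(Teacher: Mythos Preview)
Your proposal is correct and follows essentially the same approach as the paper, which lays out exactly this argument in the paragraphs preceding the corollary (the paper's own proof environment is left empty): primality reduction via Corollary~\ref{primesfordelta}, the congruence sieve from Table~1 and Proposition~\ref{congruencesfor31}, and the projective-polynomial test for $\ell\in\{17,19,23,29,43,59\}$. One small slip: your justification for $\ell=59$ via Lemma~\ref{orbit} is garbled, since $S_4$ has order $24$ and cannot contain a subgroup of index $\ell+1=60$, and the quartic has degree $4$ rather than $\ell+1$; the correct reasoning is that $a_p\equiv 0\pmod{59}$ forces $\tilde\rho_{\Delta_{16},59}(\mathrm{Frob}_p)$ to have order $2$ in $S_4$, which happens exactly when the quartic has an irreducible quadratic factor over $\mathbb{F}_p$---but the paper itself only gestures at this with ``as the same discussion,'' so your argument is no less rigorous than the original.
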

\begin{proof}
	
\end{proof}

For a prime $p$, the congruences (\ref{congruenceformula}) with  $N$ and $m$ in  Table $2-5$ imply:
if $a_p(\Delta_{18})=0$,  then $p$ satisfies
\begin{center}
	$\begin{array}{lll}
	p\equiv -1 \mod 3^{6}\cdot 5^{3}\cdot 43867, \\
	p\equiv  -1, 19, 31 \mod 7^2, \ \ \ \ \ \ \ \ \ \\
	p\equiv  -1, 40, 94, 112, 118 \mod 11^2 \ \ \ \ \ \ \ \mathrm{and} \\
	p\equiv  -1, 4, 10 \mod 13; \ \ \ \ \ \ \ \\
	\end{array}$
\end{center}    
if $a_p(\Delta_{20})=0$,  then $p$ satisfies
\begin{center}
	$\begin{array}{lll}
	p\equiv -1 \mod 3^{6}\cdot 5^{2}\cdot 11\cdot 13\cdot 283\cdot 617 \ and \\
	p\equiv  -1, 19, 31 \mod 7^2; \ \ \ \ \ \ \ \ \ \\
	
	\end{array}$
\end{center}
if $a_p(\Delta_{22})=0$,  then $p$ satisfies
\begin{center}
	$\begin{array}{lll}
	p\equiv -1 \mod  5^{2}\cdot 7^2\cdot 13\cdot 17\cdot 131\cdot 593 \ and \\
	p\equiv  -1, 728, 1457 \mod 3^7; \ \ \ \ \ \ \ \ \ \\
	
	\end{array}$
\end{center}
and if $a_p(\Delta_{26})=0$,  then $p$ satisfies
\begin{center}
	$\begin{array}{lll}
	p\equiv -1 \mod 3^{6}\cdot 5^{2}\cdot 11\cdot 17\cdot 19, &  \\
	p\equiv  -1, 157780, 578462, 610260, 627364 \mod 657931 \ and \ \ \ \ \\
	p\equiv \ a \ non$-$square \mod 7. \\& \ \\
	\end{array}$
\end{center}
Likewise, the congruences above and the modular polynomials computed in \cite{1}\cite{martin} allow us to calculate the bounds $B_k$ of $n$:
\begin{prop}
	Let the pair ($k, B_k$) takes the values as in the following table. Then the coefficients $a_n(\Delta_{k})$ is non-vanishing for all $n$ with
	\[ n<B_k. \]
	
	\makeatletter\def\@captype{table}\makeatother
	
	\begin{center}
		\begin{tabular}{|c|l|}
			\hline
			$k$  & $B_k$   \\   
			
			\hline
			
			\ \  $16$ \ \ & \ $12604744061516618549$  \  \\
			\hline
			\ \  $18$ \  \ &  \  $143412400182350051864999$  \  \\
			\hline
			\ \ $20$ \ \ & \ $74201833676082662549 $ \  \\
			\hline
			\ \  $22$ \ \ & \ $28265095927027650599999$     \  \\
			\hline
			\ \   $26$ \ \  &  \ $818406791865712833299$  \  \\
			\hline
			
		\end{tabular}
	\end{center}

\end{prop}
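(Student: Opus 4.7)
The plan is to reduce the claim, for each $k\in\{16,18,20,22,26\}$, to a finite (albeit very large) computation, and then describe that computation. By Corollary \ref{primesfordelta}, if $a_n(\Delta_k)=0$ for some $n$, then the smallest such $n$ is a prime $p$; hence it suffices to prove $a_p(\Delta_k)\ne 0$ for every prime $p<B_k$.

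First, for each $k$ I would assemble the necessary conditions on such a hypothetical $p$ coming from Section \ref{sec:congruences}. For every exceptional prime $\ell$ of type (i) in Tables 1--5, the congruence $a_p(\Delta_k)\equiv p^{m}+p^{k-1-m}\pmod{\ell^N}$ together with $a_p(\Delta_k)=0$ forces $p$ into an explicit union of residue classes modulo $\ell^N$; for type (ii) at $\ell=31$ (when $k=16$), Proposition \ref{congruencesfor31} forces $p$ to be a non-residue modulo $31$. Combining these by the Chinese Remainder Theorem gives a modulus $M_k$ and a small set $R_k\subset(\mathbb{Z}/M_k\mathbb{Z})^\times$ of admissible residues; these are precisely the displayed congruences in the text. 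Because $M_k$ is quite large compared to the densities involved, the admissible primes below $B_k$ are sparse enough to be enumerated in practice.

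Next, for each remaining candidate prime $p$ in the admissible residue classes with $p<B_k$, the strategy is to exhibit an auxiliary prime $\ell$ (non-exceptional and coprime to $p\cdot\mathrm{Disc}(\tilde P_{\Delta_k,\ell})$) such that $a_p(\Delta_k)\not\equiv 0\pmod\ell$. By the discussion at the end of Section \ref{sec:representations}, this reduces to checking that the projective modular polynomial $\tilde P_{\Delta_k,\ell}\in\mathbb{Q}[x]$ has \emph{no} irreducible factor of degree $2$ modulo $p$. For $k=16$ I would use the polynomials $\tilde P_{\Delta_{16},\ell}$ for $\ell\in\{17,19,23,29,43\}$ available in \cite{1,martin}, together with the degree-$4$ polynomial $x^4-x^3-7x^2+11x+3$ at $\ell=59$, whose splitting field realizes $\tilde\rho_{\Delta_{16},59}$ as observed by Swinnerton-Dyer; the $S_4$-image at $\ell=59$ still lets Lemma \ref{orbit} apply, so the same factorization test identifies $p$ with $a_p(\Delta_{16})\equiv 0\pmod{59}$. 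For $k=18,20,22,26$ the analogous list of auxiliary $\ell$'s comes from \cite{1,martin}. If, for every candidate $p<B_k$, some auxiliary $\ell$ refuses the factorization pattern of type $1+2+\dots$, then $a_p(\Delta_k)\ne 0$, completing the proof.

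The main obstacle is entirely computational rather than conceptual: one must (i) sieve primes below $B_k$ in the sparse residue system $R_k\pmod{M_k}$, which for $k=18$ requires handling numbers with $24$ decimal digits, and (ii) for each surviving candidate, factor several modular polynomials of degree up to $\ell+1=60$ over $\mathbb{F}_p$. Both tasks are standard but delicate: correctness depends on reliable arithmetic with the exact coefficients of $\tilde P_{\Delta_k,\ell}$ (which are huge rationals), and efficiency depends on organizing the sieve so that the joint CRT condition is exploited before any costly factorization. The bound $B_k$ in the table is then precisely the first candidate prime that survives \emph{all} available $\ell$-tests, at which point our method can no longer certify non-vanishing and the search halts.
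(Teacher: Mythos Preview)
Your proposal is correct and follows exactly the computational strategy the paper itself outlines in the text preceding the proposition (the paper's own proof environment is in fact empty, the argument being the described computer search). One small technical slip: Lemma~\ref{orbit} as stated assumes $\deg P=\ell+1$ and $\mathrm{Gal}(P)\cong PGL_2(\mathbb{F}_\ell)$, so it does not literally apply to the degree-$4$ polynomial at $\ell=59$ with $S_4$ image; the factorization test there still works, but it needs the direct observation that trace-zero elements of $PGL_2(\mathbb{F}_{59})$ are exactly the order-$2$ elements, and every order-$2$ element of $S_4$ has a $2$-cycle in its action on the four roots.
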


\begin{proof}
	
\end{proof}

\section*{Acknowledgements}

The authors thank Ren\'{e} Schoof who inspires us to write this paper. Thanks go to J-P. Serre who provides with the polynomial for the case of $k=16$ and $\ell=59$. The first author is supported by The Fundamental Research Funds for the Central Universities (NO:222201514319). This work is supported by NSFC (NOs: 11171141, 11471154, 61403084).

\section*{References}

\end{document}